\theoremstyle{plain}
\newtheorem{theorem}{Theorem}
\newtheorem{lemma}{Lemma}
\newtheorem{corollary}[theorem]{Corollary}
\theoremstyle{definition}
\newtheorem{remark}{Remark}
\numberwithin{equation}{section}
\numberwithin{theorem}{section}
\numberwithin{table}{section}
\numberwithin{figure}{section}
\newcommand{\G}{\mathcal G}
\begin{document}

\title[On Hofstadter's $G$--Sequence]
{A Combinatorial interpretation of Hofstadter's G-sequence}

\author{Mustazee Rahman}

\address{Department of Mathematics\\
University of Toronto\\
40 St. George Street\\
ON M5S 2E4\\
Canada}

\email{mustazee.rahman@utoronto.ca}

\date{October 2, 2010}

\keywords{nested recursion, meta-Fibonacci, Hofstadter's $G$-sequence}

\subjclass[2000]{Primary: 05A15; Secondary: 05A19, 11B37, 11B39}


\begin{abstract}We give a combinatorial interpretation of a classical meta-Fibonacci sequence
defined by $G(n) = n - G(G(n-1))$ with the initial condition $G(1) = 1$, which appears in Hofstadter's
``G\"odel, Escher, Bach: An Eternal Golden Braid''. The interpretation is in terms of an infinite labelled tree.
We then show a couple of corollaries about the behaviour of the sequence $G(n)$ directly from the interpretation.
\end{abstract}

\maketitle

\section{Introduction} \label{Sec:intro}

In his book ``G\"odel, Escher, Bach: An Eternal Golden Braid" Douglas Hofstadter
introduced his $G$-sequence defined as
\begin{equation} \label{eqn:g} G(n) = n - G(G(n-1))\;; \quad G(1) = 1.\end{equation}
This recursion is part of the general family of recursions given by $G(n) = n - G(G^k(n-1))$ with initial condition
$G(1) = 1$. The superscript of $k \geq 1$ means a $k$-fold composition of the function $G(n)$.  Recursions of 
this form, where the argument of the defining terms depend on previous values of the recursive function, are
called meta-Fibonacci or nested recursions. There is knowledge about Hofstadter's $G$-sequence in literature
nowadays, but little is known about the other $k$-fold recursions above.

Let $F_n$ denote the Fibonacci numbers, defined by $F_n = F_{n-1} + F_{n-2}$ and $F_1 = F_2 = 1$. Meek and Van Rees
\cite{M&R} showed that if $n = F_{r_1} + \cdots + F_{r_j}$ is the Zeckendorf representation\footnote{Given any positive
integer $n$, it is possible to write $n$ uniquely as $n = F_{r_1} + \cdots + F_{r_j}$ where $r_i \geq r_{i+1} + 2$ for $1 \leq i \leq j-1$.
This is called the Zeckendorf representation of $n$.} of $n$ then $G(n) = F_{r_1 -1} + \cdots + F_{r_j - 1}$. Soon both Granville
and Rasson \cite{G&R}, and Downey and Griswold \cite{D&G} showed that $G(n) = \lfloor (n+1)\phi^{-1} \rfloor$ where
$\phi = \frac{\sqrt{5} + 1}{2}$ is the golden ratio. Our result is an interpretation for $G(n)$ in terms of counting labels in an infinite
labelled tree. After our discovery we learned that the result was known by some in meta-Fibonacci circles, but to the best of our
knowledge there is no published proof of this combinatorial interpretation in literature. The purpose of this paper is not only to
give a proof of the interpretation, but also to provide motivation for adapting this approach in order to find a combinatorial
interpretation of the related $k$-fold recursion above.

\section*{Acknowledgements}
The author would like to thank professor Steve Tanny for bringing the questions addressed in this paper to his attention
during an undergraduate project. Without his guidance most of this work would not be possible. The author is also grateful
to Ilia Smirnov for carefully proofreading the paper.

\section{Combinatorial interpretation of the recursion $G(n)$} \label{Sec:g}
Throughout this section we will refer to Hofstadter's $G$-sequence as defined in \eqref{eqn:g} as $G(n)$. Table \ref{tbl:G}
contains the first 20 values of $G(n)$. Notice from this table that the difference between a term of $G(n)$
from the previous term is always 0 or 1. Such a sequence of positive integers is called slow-growing. It is not difficult to
show via induction that $G(n)$ remains slow-growing for all $n$. More importantly, let $f(n) = \#\, G^{-1}(\{n\})$ for $n \geq 1$,
which we call the frequency sequence of $G(n)$. The frequency sequence displays many patterns (see Table \ref{tbl:F}), for
example, it consists only of 1s and 2s. In fact, let us interpret the frequency sequence as an infinite word $W = \prod_{n=1}^{\infty}f(n)$
where the product stands for concatenation. Setting $w_1 = 2, w_2=1, w_3 = 2$, and $w_n = w_{n-1}w_{n-2}$ for $n \geq 4$,
we can experimentally verify that the initial segments of the word $W$ factorizes as $W = w_1w_2w_3^2w_4^2\cdots w_n^2$.
Such observations about the frequency sequence motivate us to define a labelled tree such that for each $n$, $G(n)$ is the label
of the parent of vertex $n+1$.

\begin{table}[!ht]
\fontsize{10}{10}\selectfont
\caption{First 20 terms of $G(n)$}\label{tbl:G}
\begin{tabular}{|r | r r r r r r r r r r r r r r r r r r r r|}
\hline
$n$&1&2&3&4&5&6&7&8&9&10&11&12&13&14&15&16&17&18&19&20\\\hline
$G(n)$&1&1&2&3&3&4&4&5&6&6&7&8&8&9&9&10&11&11&12&12\\\hline
\end{tabular}
\end{table}

\begin{table}[!ht]
\fontsize{10}{10}\selectfont
\caption{First 20 terms of the frequency sequence $f(n)$}\label{tbl:F}
\begin{tabular}{|r | r r r r r r r r r r r r r r r r r r r r|}
\hline
$n$&1&2&3&4&5&6&7&8&9&10&11&12&13&14&15&16&17&18&19&20\\\hline
$f(n)$&2&1&2&2&1&2&1&2&2&1&2&2&1&2&1&2&2&1&2&1\\\hline
\end{tabular}
\end{table}

This tree, denoted as $\G$, is defined recursively as follows. $\G$ has a root vertex whose left subtree is a copy of $\G$.
The root also has a right child which itself has a copy of $\G$ as its only subtree. The definition is pictured in the left-half of
Figure \ref{fig:G}. The labeling procedure works by denoting the root of $\G$ as vertex $1$, and then labeling the vertices in
increasing order of their height from the root. All vertices at a specific height are labelled in increasing order from \emph{right to left}.
The right half of Figure \ref{fig:G} shows $\G$ labeled up to height four.

\begin{figure}[htpb]
\begin{center}
\includegraphics[scale=.35]{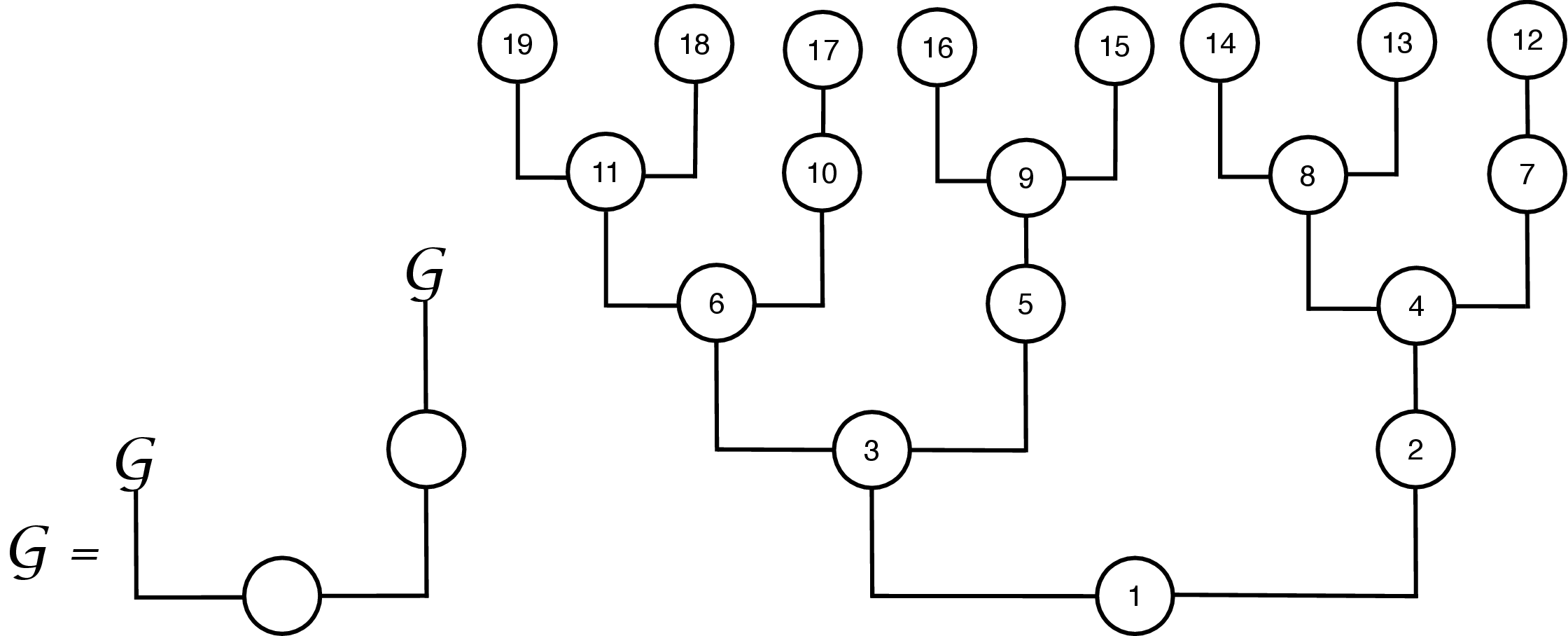}
\caption{$\G$ is defined on the left. To the right is $\G$ labelled up to height 4.} \label{fig:G}
\end{center}
\end{figure}

\begin{lemma} \label{lem}
The tree $\G$ contains $F_{h+2}$ vertices at height $h$. Consequently, the number of vertices in $\G$ from
height 0 through $h$ is $F_{h+4} - 2$.\end{lemma}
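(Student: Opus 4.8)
The plan is to extract a recurrence for the number of vertices at each height directly from the self-similar definition of $\G$, and then recognize it as the Fibonacci recurrence. Let $a_h$ denote the number of vertices of $\G$ at height $h$. First I would unwind the recursive description: the root sits alone at height $0$, so $a_0 = 1$. The root's left subtree is a copy of $\G$ whose own root lies at height $1$, so the vertices of this copy at its internal height $j$ appear at height $j+1$ of the whole tree. The root's right child is a single vertex at height $1$, beneath which hangs another copy of $\G$ whose root lies at height $2$; the vertices of this second copy at internal height $j$ appear at height $j+2$ of the whole tree.

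Reading off the contributions height by height gives the recurrence. At height $1$ the left copy contributes its root ($a_0 = 1$ vertex) and the right child contributes itself, so $a_1 = a_0 + 1 = 2$. For $h \geq 2$ the right child no longer contributes, the left copy contributes $a_{h-1}$ vertices, and the copy beneath the right child contributes $a_{h-2}$ vertices, whence
\begin{equation*}
a_h = a_{h-1} + a_{h-2}, \qquad h \geq 2.
\end{equation*}
With the base values $a_0 = 1 = F_2$ and $a_1 = 2 = F_3$, a straightforward strong induction using the Fibonacci recurrence $F_{h+1} + F_h = F_{h+2}$ yields $a_h = F_{h+2}$ for all $h \geq 0$, which is the first claim.

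For the cumulative count I would sum the per-height totals. The number of vertices from height $0$ through $h$ is
\begin{equation*}
\sum_{j=0}^{h} a_j = \sum_{j=0}^{h} F_{j+2} = \sum_{k=2}^{h+2} F_k.
\end{equation*}
Invoking the standard identity $\sum_{k=1}^{m} F_k = F_{m+2} - 1$ with $m = h+2$, and subtracting the omitted term $F_1 = 1$, gives $F_{h+4} - 1 - 1 = F_{h+4} - 2$, as required.

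The only genuinely delicate step is correctly translating the recursive picture of $\G$ into the recurrence for $a_h$: one must track that the two embedded copies of $\G$ are rooted at two different heights, one and two, and handle the boundary case $h=1$ separately, since there the height-two copy has not yet begun and the lone right child must be counted by hand. Once this bookkeeping is pinned down, the remainder is routine Fibonacci algebra.
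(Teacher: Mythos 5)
Your proposal is correct and follows essentially the same route as the paper: read off the Fibonacci recurrence for the per-height counts from the two embedded copies of $\G$ (rooted at heights one and two), match the initial values $F_2$ and $F_3$, and then sum using the identity $F_1 + \cdots + F_m = F_{m+2} - 1$. Your explicit treatment of the $h=1$ boundary case is slightly more careful than the paper's, but the argument is the same.
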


\begin{proof} From the definition of $\G$ we note that vertices at height $h$ of $\G$ consists of vertices at height $h-2$ of $\G$
contributed by the right subtree of the root and vertices at height $h-1$ contributed by the left subtree of the root. As such, the
number of vertices at height $h$ satisfies the Fibonacci recursion with initial conditions $F_2$ and $F_3$ for heights 0 and 1
respectively. It follows that there are $F_{h+2}$ vertices at height $h$ and that the number of vertices from height $0$ to $h$
is $F_2 + \cdots + F_{h+2} = F_{h+4} - 2$. We deduce the latter from the well-known Fibonacci identity $F_1 + \cdots + F_h = F_{h+2} - 1$.
See, for example, \cite[p.\ 2]{ABJQ} where a combinatorial proof is provided.\end{proof}

\begin{remark} We could alter the tree $\G$ to make it so that $G(n)$ is the label of the parent of vertex $n$. To do this we need
to insert a new vertex below the current root of $\G$ and then attach it to the root. We then redo the labeling starting from the new
vertex. However, there are certain advantages with the current definition. First, we think that it makes the recursive definition of $\G$
more pleasant. Further, the definition as it stands will make some of the computations in the proof of the combinatorial interpretation
easier and the exposition more clear.\end{remark}

\begin{theorem} \label{thm:G}
Let $g(n)$ denote the label of the parent of vertex $n+1$ in the infinite tree $\mathcal{G}$. 
Then $g(n)$ satisfies the recursion $g(n) = n - g(g(n-1))$ with the initial value $g(1) = 1$.\end{theorem}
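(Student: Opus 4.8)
The plan is to show that the tree-label function $g(n)$ satisfies the same recursion and initial condition as $G(n)$, which forces $g = G$ by induction. Since both are determined by the recursion $h(n)=n-h(h(n-1))$ with $h(1)=1$, it suffices to verify that $g$ obeys this single defining relation. So my real task is to understand the labels in $\mathcal{G}$ well enough to compute, for each $n$, the label of the parent of vertex $n+1$.

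Let me think about what the labeling gives us. $\textbf{Key structural facts I would establish first.}$ The vertices are enumerated by height, and within a fixed height from right to left. By Lemma~\ref{lem}, height $h$ contains $F_{h+2}$ vertices, so the vertices through height $h$ are exactly those with labels $1,2,\ldots,F_{h+4}-2$; equivalently, vertex $n$ lies at height $h$ precisely when $F_{h+3}-2 < n \le F_{h+4}-2$. The first step is to translate the recursive self-similar structure of $\mathcal{G}$ (a left copy of $\mathcal{G}$ and a right branch carrying another copy of $\mathcal{G}$) into an explicit description of the parent-of relation on labels. Because each vertex at height $h$ has children at height $h+1$, and the counts satisfy the Fibonacci recursion, I expect the parent map to send an interval of consecutive labels at height $h+1$ onto the labels at height $h$ in a controlled, nearly two-to-one fashion — matching the observation that the frequency sequence $f(n)$ consists only of $1$s and $2$s. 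Making this precise is the heart of the argument.

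$\textbf{The main computation.}$ Writing $g(n)$ for the parent-label of vertex $n+1$, I would fix $n+1$ at some height $h+1$ and express $g(n)$ in terms of the position of $n+1$ within its height-level, using the right-to-left ordering and the Fibonacci block sizes. The goal is to verify the identity $n = g(n) + g(g(n-1))$. I anticipate that $g(g(n-1))$ has a clean interpretation: $g(n-1)$ is the parent of vertex $n$, and applying $g$ again climbs one more level, so $g(g(n-1))$ is essentially a grandparent label, and the quantity $n - g(n)$ should count exactly the vertices lying (in the enumeration) between a vertex and its grandparent's subtree boundary. I would set up the bookkeeping so that the Fibonacci identities from Lemma~\ref{lem} collapse the count to the required value. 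The base case $g(1)=1$ is immediate: vertex $2$ is the right child of the root, whose label is $1$.

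$\textbf{Where the difficulty lies.}$ The main obstacle is handling the two structurally different cases — whether vertex $n+1$ descends through the root's right branch (the part contributing height $h-2$ vertices) or through the root's left copy of $\mathcal{G}$ — and checking that the Fibonacci offsets align in both. The right-to-left labeling convention is precisely what makes the offsets uniform across levels, so I would be careful to exploit it rather than fight it; I suspect the cleanest route is an induction on height that tracks how the labels of one level map to the next, possibly phrased via an auxiliary claim identifying the parent of vertex $n+1$ with an explicit Fibonacci-based formula for $g(n)$, from which the recursion follows by direct substitution together with the identity $F_1+\cdots+F_h = F_{h+2}-1$ already invoked in Lemma~\ref{lem}. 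Verifying that this formula is consistent at the boundaries between consecutive height-levels is the step I expect to require the most care.
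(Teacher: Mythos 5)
Your proposal is an outline rather than a proof: the central verification is never carried out. Everything you state as established (the base case $g(1)=1$, the height ranges $F_{h+3}-2 < n \le F_{h+4}-2$ from Lemma~\ref{lem}) is correct, but the actual content of the theorem --- the identity $n = g(n) + g(g(n-1))$ --- is only announced as a goal. Phrases such as ``I would set up the bookkeeping so that the Fibonacci identities \ldots collapse the count to the required value'' and ``I anticipate that $g(g(n-1))$ has a clean interpretation'' are placeholders for the argument, not the argument. In particular, you never produce the ``explicit Fibonacci-based formula for $g(n)$'' on which your proposed induction on height is supposed to rest, so there is nothing to substitute into the recursion.

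The missing idea is a concrete mechanism for relating index $n$ to a smaller index where the recursion is already known to hold. The paper does this by exploiting self-similarity directly: vertex $n+1$ (at height $h>4$) lies in one of the two subtrees isomorphic to $\G$; relabelling that subtree from scratch assigns it a smaller label $n_0+1$, and one then computes the three shifts $(n+1)-(n_0+1) = F_{h+2}$, $g(n)-g(n_0) = F_{h+1}$, and $g(g(n-1))-g(g(n_0-1)) = F_h$ by counting the interleaved vertices of the other subtree, after which $F_{h+2} = F_{h+1}+F_h$ and the induction hypothesis at $n_0$ close the argument. The genuinely delicate part --- which your sketch does not identify beyond a vague remark about ``boundaries between consecutive height-levels'' --- is that the identification of vertex $g(g(n-1))$ in $\G$ with vertex $g(g(n_0-1))$ in the relabelled subtree fails when $n+1$ is the first or last vertex at its height within its subtree; in one of those exceptional cases the two vertices even sit in different subtrees and the difference must be computed from explicit first-vertex labels $F_{h+2}-1$ and $F_{h+1}-1$. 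Without either this reduction-and-exception analysis or a fully worked closed-form for $g(n)$, the proposal cannot be assessed as correct; as written it has a genuine gap at precisely the step where all the work lies.
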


\begin{proof} The proof is by induction on the vertex labels $n$. By comparing the values of $G(n)$ from Table \ref{tbl:G} with the values of
$g(n)$ from Figure \ref{fig:G}, we see that the theorem holds for the first 19 vertices, which consists of all vertices of $\G$ up to height 4.
Now suppose that vertex $n+1$ is located at height $h > 4$ and that the theorem holds for all vertices in $\G$ preceding $n+1$.

As $h > 4$, vertex $n+1$ is either located on the left subtree of vertex 1 or on the subtree emanating from vertex 2.
Both these subtrees are a copy of $\G$. Denote the subtree containing vertex $n+1$ as $\G'$, and the subtree not containing
$n+1$ as $\G''$. If we now remove all the labels of $\G'$ that it inherits from $\G$, and relabel $\G'$ in the same manner as
we labeled $\G$ starting from 1, then vertex $n+1$ of $\G$ will receive a new label $n_0 + 1$ on $\G'$ with $n_0 < n$ since
$n_0 + 1$ is located at a lower height in $\G'$. (Specifically, at height $h-1$ of $\G'$ if $\G'$ is the left subtree of
vertex 1 or at height $h-2$ of $\G'$ if it is the subtree of vertex 2.) As an example, consider $n + 1 = 10$ in which case
$\G'$ is the left subtree of vertex 1 and $n_0 + 1 = 5$. We establish the recursive formula at index $n$ through its validity
at index $n_0$ and by evaluating the differences $(n+1) - (n_0 + 1)$, $g(n) - g(n_0)$, and $g(g(n-1)) - g(g(n_0-1))$.\\

\paragraph{\textbf{Evaluating $(n+1) - (n_0 + 1)$:}} The difference $(n+1) - (n_0+1)$ is the number of vertices in $\G$ consisting of vertices 1, 2, and all the vertices preceding $n+1$ that are in $\G''$. Such vertices preceding $n+1$ in $\G''$ consist of all vertices from height 0 to $h-2$ of $\G''$. As $\G''$ is isomorphic to $\G$, there are $F_{h+2} - 2$ vertices from height 0 to $h-2$ by Lemma \ref{lem}. Therefore, \begin{equation} \label{eqn1} n+1 = (n_0 + 1) + 2 + (F_{h+2} - 2). \end{equation}

\paragraph{\textbf{Evaluating $g(n) - g(n_0)$:}} To this end notice that the parent of vertex $n+1$ in $\G$ is the same as the parent of the vertex labelled $n_0+1$ in $\G'$, but its label differs between the two trees. Vertices 1 or 2 cannot be the parent as $n+1$ is above height 4. The difference $g(n) - g(n_0)$ is again given by accounting for vertices 1 and 2 in $\G$ along with the vertices in $\G$ preceding the parent of $n+1$ and residing in $\G''$. Since the parent of vertex $n+1$ is at height $h-1$ of $\G$, the vertices in $\G''$ that precede the parent of $n+1$ range from height 0 to $h-3$ of $\G''$. Counting as before we deduce that $g(n) - g(n_0) = F_{h+1}$. Thus, since $n = n_0 + F_{h+2}$ from ~(\ref{eqn1}), we conclude that
\begin{equation} \label{eqn2} n - g(n) = n_0 - g(n_0) + F_h.\end{equation}

\paragraph{\textbf{Evaluating $g(g(n-1)) - g(g(n_0-1))$:}}  We begin by showing that except for some instances vertex $g(g(n-1))$ in $\G$ is the same
as the one labeled $g(g(n_0-1)$ in the separate labeling of $\G'$. In this case we will be able to compute $g(g(n1)) - g(g(n_0-1))$ readily.
Let us assume that vertex $n+1$, considered as a vertex of $\G$ situated in the subtree $\G'$, is not the first or last vertex at its height in $\G'$.

A consequence of vertex $n+1$ not being the first vertex at it height in $\G'$ is that vertex $n$ lies in $\G'$ and points to the same vertex as the one labelled $n_0$ in the separate labeling of $\G'$. As such $g(n-1)$ points to the parent vertex of $n_0$ in $\G'$, which is labelled $g(n_0-1)$ in the separate labeling of $\G'$. This parent cannot be the last vertex at its height in $\G'$, for otherwise, it would be the common parent of vertices $n_0$ and $n_0 +1$ in $\G'$ located at the height above. This would make $n_0+1$ the last vertex at its height in $\G'$, contradicting the assumption that vertex $n+1$ in $\G$, pointing to the same vertex as the one separately labeled $n_0+1$ in $\G'$, is not so. 

As vertex $g(n-1)$ of $\G$ is not the last vertex at its height in the subtree $\G'$, vertex $g(n-1) + 1$ of $\G$ lies in the subtree $\G'$
and points to the same vertex as the one separately labelled $g(n_0-1) + 1$ in $\G'$. Hence the parent of vertex $g(n-1) + 1$ in $\G$,
lying at height $h-2$ of $\G$, also lies in $\G'$ and points to vertex $g(g(n_0-1))$ in the separate labeling of $\G'$ (recall that $h > 4$
so vertex $g(g(n_0-1))$ is defined in $\G'$). This confirms that vertex $g(g(n-1))$ in $\G$ is the same as the vertex labeled $g(g(n_0-1))$
in $\G'$.

The difference $g(g(n-1)) - g(g(n_0-1))$ in labels again comes from vertices 1, 2 and all vertices in $\G$ preceding $g(g(n-1))$
that lie on $\G''$. Since vertex $g(g(n-1))$ is at height $h-2$ of $\G$, the vertices preceding $g(g(n-1))$ that lie on $\G''$ range from
height 0 to $h-4$ of that subtree. There are $F_h - 2$ of them by Lemma \ref{lem} and thus
\[g(g(n-1)) = g(g(n_0-1)) + 2 + (F_h - 2) = g(g(n_0-1)) + F_h\,.\] By the induction hypothesis $n_0 - g(n_0) = g(g(n_0 -1))$, and so
by (\ref{eqn2}) we get \[n - g(n) = n_0 - g(n_0) + F_h = g(g(n_0-1)) + F_h = g(g(n-1))\,.\]

We now deal with the exceptional cases, assuming first that vertex $n+1$ in $\G$ is located on the left subtree of vertex 1 and
at heigh $h > 4$. Deviating from the previous notation a bit, denote the left subtree containing $n+1$ as $\G_l$ and the
subtree emanating from vertex 2 as $\G_r$. Vertex $n+1$ is at height $h-1$ of $\G_l$. If it is the first vertex at height $h-1$ of
$\G_l$ then vertex $n$ is the last vertex at height $h-2$ of $\G_r$. So $g(n-1)$ is the last vertex at height $h-3$ of $\G_r$ and
so $g(n-1) + 1$ is the first vertex at height $h-2$ of $\G_l$. Vertex $g(g(n-1))$ is then the first one at height $h-3$ of $\G_l$.

Meanwhile, when $\G_l$ is labeled separately as before, vertex $n+1$ in $\G$ will again point to the same vertex as some vertex
labeled $n_0+1$ in $\G_l$. Then the vertex labeled $n_0$ in $\G_l$ becomes the last vertex at height $h-2$ of $\G_l$, which implies
that $g(n_0-1)$ is the last vertex at height $h-3$ of $\G_l$, and $g(n_0-1) + 1$ is thus the first vertex of $\G_l$ at height $h-2$. So vertex
$g(g(n_0-1))$ in $\G_l$, the parent of $g(n_0-1) + 1$, is the first vertex at height $h-3$ of $\G_l$. It points to the same vertex as the one
labelled $g(g(n-1))$ in $\G$. The difference $g(g(n-1)) - g(g(n_0-1))$ is thus $F_h$ due to the same reasons as in the previous case.
Thus from the induction hypothesis for $n_0$, we deduce as before that $n-g(n) =g(g(n-1))$.

When vertex $n+1$ in $\G$ is the last vertex at height at height $h-1$ of $\G_l$, vertex $g(n-1)$ is the common parent of vertices
$n$ and $n+1$, situated as the last vertex at height $h-2$ of $\G_l$. So $g(n-1) + 1$ is the first vertex at height $h-2$ of $\G_r$
making $g(g(n-1))$ the first vertex at height $h-3$ of $\G_r$. On the other hand, with $n_0 +1$ defined as before, vertex $g(g(n_0-1))$
in $\G_l$ will be the first one at height $h-2$ of $\G_l$. Both vertices $g(g(n-1))$ and $g(g(n_0-1))$ are located at height $h-1$ of $\G$.

One must be careful about calculating $g(g(n-1)) - g(g(n_0-1))$ because vertex $g(g(n-1))$ of $\G$ occurs \emph{before} the vertex
pointing to $g(g(n_0-1))$ in $\G$. Although the label of the latter vertex on $\G_l$, which is $g(g(n_0-1))$, is less than $g(g(n-1))$.
In this situation we simply note that since vertex $g(g(n_0-1))$ is the first one at height $h-2$ of $\G_l$, its label is
$F_2 + \cdots + F_{h-1} + 1 = F_{h+1} - 1$. Similarly, as $g(g(n-1))$ is the first vertex at height $h-1$ of $\G$, its label is $F_{h+2} - 1$.
Therefore $g(g(n-1)) - g(g(n_0-1)) = F_{h+2} - F_{h+1} = F_h$. As before it follows from the induction hypothesis on $n_0$ that
$n - g(n) = g(g(n-1))$.

When vertex $n+1$ is located on the subtree $\G_r$, the exceptional cases are analogous to the previous ones. The case
for vertex $n+1$ being the last one at height $h-2$ of $\G_r$ is analogous to the case when $n+1$ is the first vertex at height
$h-1$ of $\G_l$. When vertex $n+1$ is the first vertex at height $h-2$ of $\G_r$ the situation is analogous to when $n+1$ is the
last vertex at height $h-1$ of $\G_l$. This completes our induction and the proof of the theorem.
\end{proof}

We now present a corollary which is the simplest case of Meek and Van Rees's result on how $G(n)$ acts on positive
integers when written in their Zeckendorf representation. Recall from the introduction that the authors showed if $n = F_{r_1} + \cdots + F_{r_j}$
is the Zeckendorf representation of $n$ then $G(n) = F_{r_1-1} + \cdots + F_{r_j-1}$. We now show this for $n = F_r$, that is,
for the Fibonacci numbers. With more work one could get the whole result from the combinatorial interpretation but we will not
pursue that direction. Instead we will prove the factorization property of the word $\prod_{n=1}^{\infty} f(n)$ that we observed earlier
experimentally, and which led to our discovery of the combinatorial interpretation for $G(n)$.

\begin{corollary} \label{cor1}
$g(F_n) = F_{n-1}$ for $n \geq 2$, where $F_n$ is the $n^{th}$ Fibonacci number.
\end{corollary}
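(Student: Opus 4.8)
The plan is to prove the corollary directly from the combinatorial interpretation, using the explicit knowledge of where the Fibonacci-indexed vertices sit in the tree $\G$ together with the vertex counts from Lemma~\ref{lem}. The key observation I would exploit is that the cumulative vertex count from height $0$ through height $h$ is $F_{h+4}-2$, so the labels $F_n-1$ for various $n$ correspond to the \emph{last} vertices at each height. Concretely, since $g(n)$ is the label of the parent of vertex $n+1$, proving $g(F_n)=F_{n-1}$ amounts to showing that the parent of vertex $F_n+1$ carries the label $F_{n-1}$.

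First I would locate vertex $F_n+1$ in the tree. By Lemma~\ref{lem}, the total number of vertices from height $0$ through height $h$ is $F_{h+4}-2$; setting this equal to $F_n-1$ gives that vertex number $F_n-1$ is exactly the last vertex of its height block when $n=h+4$, i.e. vertex $F_{h+4}-2$ is the last vertex at height $h$. Hence vertex $F_n$ is the first vertex at height $n-3$, and vertex $F_n+1$ is the second vertex at that height (for $n$ large enough that these heights are positive; the small cases $n=2,3,4$ I would check directly against Figure~\ref{fig:G} and Table~\ref{tbl:G}). Because vertices at a given height are labelled right-to-left, being the first vertex at its height means being the \emph{rightmost} vertex at height $n-3$.

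Next I would identify the parent of this rightmost-region vertex. The rightmost vertices at each height form the path obtained by repeatedly taking the right child at the root and then descending into the right subtree's copy of $\G$; tracing the recursive definition of $\G$ shows the parent of the first (rightmost) vertex at height $n-3$ is the first (rightmost) vertex at height $n-4$, which by the same cumulative count is vertex $F_{n-1}-2+1 = F_{n-1}-1+1$. I would pin down the exact label by again applying Lemma~\ref{lem}: the first vertex at height $n-4$ carries label $(F_{n}-2)+1$ shifted appropriately, and a short computation with $F_{(n-4)+4}-2 = F_n-2$ versus $F_{(n-4)+3}-2$ yields the label $F_{n-1}$.

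The step I expect to be the main obstacle is correctly bookkeeping the \emph{right-to-left} labelling convention when passing from a vertex to its parent across the recursive junction at the root, since the parent of vertex $F_n+1$ may lie in a different copy of $\G$ than $F_n+1$ itself, and the ``first vertex at a height'' in $\G$ need not be the first vertex at the corresponding height inside a subtree copy. To handle this cleanly I would phrase the argument in terms of the recursive structure: observe that vertex $F_n+1$ being the second-from-right vertex at its height forces its parent to be determined by the left subtree's copy of $\G$, on which the count shifts the height index down by one, so the parent label inherits the value $F_{n-1}$ predicted by the inductive pattern. Alternatively—and more safely—I would set up the corollary as an induction on $n$ mirroring the proof of Theorem~\ref{thm:G}, passing from $\G$ to the subtree copy $\G'$ and using that $F_n+1$ relabels to $F_{n-1}+1$ under the separate labelling, so that $g(F_n)-g(F_{n-1})$ equals the same $F_h$-type correction computed in equation~\eqref{eqn2}; this reduces the statement to the base cases verifiable from the table.
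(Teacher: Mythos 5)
Your overall strategy---using Lemma~\ref{lem} to locate vertex $F_n+1$ at the start of the height-$(n-3)$ block and its parent at the start of the height-$(n-4)$ block---is the same as the paper's, but the execution contains an off-by-one error that propagates, and it omits the one structural fact the proof actually hinges on. Since heights $0$ through $n-4$ contain $F_n-2$ vertices, the \emph{first} vertex at height $n-3$ is labelled $F_n-1$, the second is $F_n$, and $F_n+1$ is the \emph{third}, not the second as you claim; likewise the first vertex at height $n-4$ is labelled $F_{n-1}-1$, not $F_{n-1}$ (your computation ``$F_{n-1}-2+1=F_{n-1}-1+1$'' is not an identity). More seriously, identifying the parent of $F_n+1$ requires knowing how the first few vertices at height $n-3$ distribute among parents at height $n-4$: the paper argues that the first two vertices at height $n-3$ account for all children of the first vertex at height $n-4$, so that the third vertex, $F_n+1$, must attach to the \emph{second} vertex at height $n-4$, whose label is $F_{n-1}$. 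You never determine the child counts of the rightmost vertices at height $n-4$; instead you compute the parent of the first vertex at height $n-3$, which is not the vertex whose parent is needed. This omitted step is genuinely delicate---the rightmost vertex at height $h$ has two children when $h$ is even and one when $h$ is odd (read off the first letter of $W_h$ in Corollary~\ref{cor2}), so what one must actually verify is that the first \emph{two} vertices at height $n-4$ have three children in total.

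Your fallback induction also does not work as stated: vertex $F_n+1$ does not relabel to $F_{n-1}+1$ in a subtree copy. For $n\geq 7$ it lies in the subtree below vertex $2$, where heights shift down by two, and it relabels to $F_{n-2}+1$; for $n=6$ it lies in the left subtree and relabels to $4\neq F_5+1$. So the induction you propose would at best relate $g(F_n)$ to $g(F_{n-2})$, and the reduction you describe is not the one carried out in \eqref{eqn2}.
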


\begin{proof}For $n > 4$, the second vertex at height $n-3$ is labeled $F_2 + \cdots + F_{n-2} + 2 = F_n$. Its parent is the first
vertex at height $n-4$ with label $F_{n-1}-1$. Thus the parent of vertex $F_n + 1$ is the second vertex at height $n-4$ since the first
one has vertices $F_n-1$ and $F_n$ as its children. Hence $g(F_n) = F_{n-1}$. For $2 \leq n \leq 4$, one can verify the claim from
Figure \ref{fig:G}. \end{proof}

\begin{corollary} \label{cor2}
Define words $w_1 = 2, w_2 = 1, w_3 = 2 = w_1$ and $w_n = w_{n-2}w_{n-1}$ for $n>3$. Let $f(n)$ denote the frequency sequence of $g(n)$.
The infinite word $W = \prod_{n=1}^{\infty} f(n)$ factorizes as $W = w_1w_2\prod_{n=3}^{\infty}w_n^2$.\end{corollary}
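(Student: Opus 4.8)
The plan is to pass from the frequency word to the tree, prove that the height-by-height children-count words obey a Fibonacci recursion, and then reduce the factorization to a formal identity coming from $w_n = w_{n-2}w_{n-1}$.

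First I would reinterpret the letters of $W$. Since $f(k) = \#\{m \geq 1 : g(m) = k\}$ and $g(m)$ is the label of the parent of vertex $m+1$, the value $f(k)$ counts exactly the non-root vertices whose parent is vertex $k$, i.e.\ the number of children of $k$ in $\G$. Every vertex of $\G$ has one or two children, which recovers the fact that $f$ takes only the values $1$ and $2$. Because the vertices of $\G$ are labelled in increasing order of height, reading $f(1), f(2), \dots$ in order groups the letters of $W$ by height. Writing $u_h$ for the word that lists, from right to left, the number of children of each vertex at height $h$ (so $|u_h| = F_{h+2}$ by Lemma~\ref{lem}), we obtain $W = u_0 u_1 u_2 \cdots$.

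The heart of the argument is to show that $u_h = u_{h-2} u_{h-1}$ for $h \geq 2$, with $u_0 = 2$ and $u_1 = 12$. By the recursive definition of $\G$, the vertices at height $h \ge 2$ are of two kinds: those lying in the subtree $\G_r$ below vertex $2$, which sit at height $h-2$ of $\G_r$, and those lying in the left subtree $\G_l$ of the root, which sit at height $h-1$ of $\G_l$. As $\G_l$ is to the left of $\G_r$, the right-to-left labelling reaches the $\G_r$ vertices first, so $u_h$ is the height-$(h-2)$ children-count word of $\G_r$ followed by the height-$(h-1)$ children-count word of $\G_l$. The number of children of a vertex is intrinsic to the tree, and both $\G_l$ and $\G_r$ are isomorphic copies of $\G$; hence these two words are precisely $u_{h-2}$ and $u_{h-1}$, giving the recursion. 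The base cases are read off directly: $u_0 = 2$ since the root has two children, and $u_1 = 12$ since vertex $2$ has one child while vertex $3$ has two. I expect this step to be the main obstacle, since the content is entirely in getting the order of concatenation right from the combination of the right-to-left labelling and the left/right placement of the two copies of $\G$; after that, self-similarity does the rest.

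It remains to identify $u_h$ with the $w_n$ and to rearrange. Matching base cases and recursions gives $u_0 = w_1$ and $u_h = w_{h+3}$ for $h \ge 1$; here the coincidence $w_1 = w_3 = 2$ noted in the statement is what makes the case $h = 2$ of the recursion, $u_2 = u_0 u_1$, agree with $w_5 = w_3 w_4$. Therefore $W = w_1 w_4 w_5 w_6 \cdots$. Applying $w_n = w_{n-2}w_{n-1}$ to each factor from $w_4$ onward and using associativity of concatenation,
\[ w_4 w_5 w_6 w_7 \cdots = (w_2 w_3)(w_3 w_4)(w_4 w_5)(w_5 w_6)\cdots = w_2\, w_3^2\, w_4^2\, w_5^2 \cdots = w_2 \prod_{n \ge 3} w_n^2, \]
so that $W = w_1 w_2 \prod_{n \ge 3} w_n^2$. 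The regrouping in the display merely reindexes the same sequence of letters, so no convergence question arises, and this completes the proof.
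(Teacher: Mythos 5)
Your proposal is correct and follows essentially the same route as the paper: interpreting $f(n)$ as the number of children of vertex $n$, decomposing $W$ by height into words satisfying $u_h = u_{h-2}u_{h-1}$ with $u_0=2$, $u_1=12$, identifying these with the $w_n$ (your $u_h = w_{h+3}$ is equivalent to the paper's $W_h = w_{h+1}w_{h+2}$), and telescoping. No substantive differences to report.
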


\begin{proof}The combinatorial interpretation makes it clear that $g(n)$ is slow-growing, and Theorem \ref{thm:G} implies that
the frequency sequence of $G(n)$ from \eqref{eqn:g} is the same as that of $g(n)$. We note that $f(n)$ is the number of children
of vertex $n$ in $\G$. Indeed, if vertex $n$ has children labeled from $a$ to $b$ then $g(k) = n$ precisely when $k$ ranges from
$a+1$ to $b+1$. Let $W_h$ denote the word produced by concatenating from left to right the values of $f(n)$
as $n$ ranges in increasing order over the vertices at height $h$ of $\G$. For $h>1$, the vertices at height $h$ of $\G$ are arranged by
placing the vertices at height $h-2$ of $\G$ to the right of the vertices at height $h-1$ of $\G$, following from the recursive definition
of $\G$. Thus $W_h = W_{h-2}W_{h-1}$ with $W_0 = 2$ and $W_1 = 12$. We show that $W_h = w_{h+1}w_{h+2}$ for $h > 0$.
We have that $W_1 = 12 = w_2w_3$ and $W_2 = 212 = w_3w_4$. Assuming that $W_h = w_{h+1}w_{h+2}$ for $1 \leq h < N$, we get that
$W_N = W_{N-2}W_{N-1} = w_{N-1}w_N w_N w_{N+1} = w_{N+1}w_{N+2}$ where the last equality uses the recursive definition of $w_n$.
The claim follows by induction. Finally, to finish the proof we note that $W_0 = 2 = w_1$ and so\begin{equation*}
\prod_{n=1}^{\infty}f(n) = \prod_{h=0}^{\infty} W_h = W_0 \prod_{h=1}^{\infty} W_h
= w_1 \prod_{h=1}^{\infty}w_{h+1}w_{h+2} = w_1w_2 \prod_{h=3}^{\infty}w_h^2.\end{equation*}
\end{proof}

The consequence of Corollary \ref{cor2} is that it allows us to view the word $W$ as two intertwined copies
of itself along with the initial seeds $w_1$ and $w_2$. More precisely, the above factorization shows that
$W = w_1w_2\prod_{n=3}^{\infty}w_n^2 = w_3 \prod_{n=2}^{\infty}w_nw_{n+1} = w_3\prod_{n=2}^{\infty}w_{n+2} = \prod_{n=3}^{\infty}w_n$.
What we hope is that one can find analogous factorizations of the frequency sequences generated by the $k$-fold recursions that was discussed in the introduction. In this manner one may find corresponding infinite trees for the $k$-fold recursions and proceed to derive a combinatorial interpretation.

\end{document}